\newtheorem{thm}{Theorem}
\newtheorem{lem}[thm]{Lemma}
\newtheorem{cor}[thm]{Corollary}
\begin{document}

\title[Gaussian integral means of entire functions: logarithmic convexity and concavity]
{\bf Gaussian integral means of entire functions: logarithmic convexity and concavity }
\thanks{Jie Xiao is supported in part by NSERC of Canada and URP of Memorial University}

\author{Chunjie Wang}
\address{Chunjie Wang, Department of Mathematics, Hebei University of Technology,
Tianjin 300401, China}
\email{wcj@hebut.edu.cn}

\author{Jie Xiao}
\address{Jie Xiao, Department of Mathematics and Statistics, Memorial University, St. John's, NL A1C 5S7, Canada}
\email{jxiao@mun.ca}

\begin{abstract}
For $0<p<\infty$ and $\alpha\in (-\infty,\infty)$ we determine when the $L^p$ integral mean on $\{z\in\mathbb C: |z|\le r\}$ of an entire function with respect to the Gaussian area measure
$e^{-\alpha|z|^2}\,dA(z)$ is   logarithmic  convex or   logarithmic  concave.
\end{abstract}

\keywords{ logarithmic convexity, logarthmic concavity, Gaussian integral mean, entire function}

\subjclass[2010]{Primary 30H10, 30H20}

\maketitle

\section{Introduction}\label{s1}

Let $dA$ be the Euclidean area measure in the finite complex plane $\mathbb{C}$. For any real number $\alpha$ and $0<p<\infty$,
the Gaussian integral means of an entire function $f: \mathbb C\to\mathbb C$ are defined by
$$
 \mathsf{M}_{p,\alpha}(f,r)=\frac{\displaystyle\int_{\{z\in\mathbb C:\ |z|\le r\}}|f(z)|^pe^{-\alpha|z|^2}\,dA(z)}
{\displaystyle\int_{\{z\in\mathbb C:\ |z|\le r\}}e^{-\alpha|z|^2}\,dA(z)},\quad 0<r<\infty.
$$
This concept lies in the theory of Fock spaces; see \cite{Z}. 

The famous Hadamard's three circles theorem for the above entire function $f$ (cf. \cite{R}) states that if 
$$
\begin{cases}
0<r_1<r<r_2<\infty;\\
{\mathsf M}(f,s)=\max\{|f(z)|;\ |z|\le s\}\quad\hbox{for}\quad s\in (0,\infty),
\end{cases}
$$
then
$$
\Big(\ln\frac{r_2}{r_1}\Big)\ln\mathsf{M}(f,r)\le\Big(\ln\frac{r_2}{r}\Big)\ln\mathsf{M}(f,r_1)+\Big(\ln\frac{r}{r_1}\Big)\ln\mathsf{M}(f,r_2),
$$
i.e., $\ln\mathsf{M}(f,r)$ is convex in $\ln r$. Continuing from \cite{WX} and its prior work \cite{WXZ, WZ, XX, XZ}, this paper investigates such an analogous problem: {\it When is the function
$r\mapsto  \ln\mathsf{M}_{p,\alpha}(f,r)$ convex or concave in $ \ln r$?} In what follows, we will see that a resolution of this question depends on the parameter $\alpha$ and its induced function
$$
\varphi(x)=\frac{1-e^{-\alpha x}}{\alpha},\quad 0<x<\infty.
$$

\begin{thm}\label{t1} Let $\alpha<0$. Suppose both $x\mapsto M(x)$ and $x\mapsto M'(x)$ are positive on $(0,\infty)$. Then the function
$$x\mapsto  \ln\frac{\displaystyle\int_0^xM(t)e^{-\alpha t} dt}
{\displaystyle\int_0^xe^{-\alpha t} dt}$$
is convex in $ \ln x$ for $x$ in an open interval $I\subset(0,\infty)$ provided that the following conditions are satisfied:
\begin{enumerate}
\item[(i)] $x\mapsto \ln M(x)$ is convex in $ \ln x$ for $x\in I$;
\item[(ii)] $$x\frac{M'(x)}{M(x)}\ge\frac{[x(1-\alpha x)-\varphi(x)][\alpha\varphi(x)^2-2(1+\alpha x)\varphi(x)+2x]}{(\varphi(x)-x)[x-(1+\alpha x)\varphi(x)]}\ \ \hbox{for}\ \ x\in I.$$
\end{enumerate}
\label{thm3}
\end{thm}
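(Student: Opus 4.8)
The plan is to abbreviate $F(x)=\int_0^x M(t)e^{-\alpha t}\,dt$ and to observe that the denominator is exactly $\varphi(x)=\int_0^x e^{-\alpha t}\,dt$, so the function under study is $\Phi(x)=\ln F(x)-\ln\varphi(x)$. Passing to $y=\ln x$ and using $\frac{d}{dy}\Phi=x\Phi'(x)$, convexity of $\Phi$ in $\ln x$ is equivalent, since $x>0$, to $\frac{d}{dx}\bigl(x\Phi'(x)\bigr)\ge 0$ on $I$. Thus, setting $\Psi(x):=x\Phi'(x)$, the whole theorem reduces to the single monotonicity statement $\Psi'\ge 0$ on $I$.

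Next I would compute $\Psi'$ in closed form. Writing $S=F'/F$ and $T=\varphi'/\varphi$ gives $\Phi'=S-T$ and $\Psi=x(S-T)$. From $F'=Me^{-\alpha x}$, $F''=(M'-\alpha M)e^{-\alpha x}$ and $\varphi''=-\alpha\varphi'$ one obtains the Riccati identities $S'=S\bigl(\tfrac{M'}{M}-\alpha\bigr)-S^2$ and $T'=-T(\alpha+T)$; substituting these into $\Psi'=(S-T)+x(S'-T')$ and collecting terms yields
\[
\Psi'=(S-T)\bigl[(1-\alpha x)-x(S+T)\bigr]+xS\,\frac{M'(x)}{M(x)}.
\]
Two facts come for free. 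Because $\alpha<0$ and $M'>0$ the function $M$ is increasing, so $S-T=\dfrac{e^{-\alpha x}}{F\varphi}\int_0^x\bigl(M(x)-M(t)\bigr)e^{-\alpha t}\,dt\ge 0$, i.e. $S\ge T>0$; and the last summand is positive. Hence whenever the bracket $(1-\alpha x)-x(S+T)$ is nonnegative we are done, and the only genuine work is the regime where this bracket is negative.

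In that regime $x(S+T)-(1-\alpha x)>0$, so $\Psi'\ge 0$ is equivalent to
\[
x\,\frac{M'(x)}{M(x)}\ \ge\ g(S):=\frac{(S-T)\bigl[x(S+T)-(1-\alpha x)\bigr]}{S}\ \ge\ 0.
\]
The right-hand side still contains the nonlocal quantity $S=F'/F$, so the task is to bound $g(S)$ by an explicit function of $x$, $\varphi$ and $\alpha$. This is exactly where hypothesis (i) enters: log-convexity of $M$ in $\ln x$ — equivalently that $t\,M'(t)/M(t)$ is non-decreasing — is the only global information available and it must furnish an explicit upper bound for $S$, hence for $g(S)$. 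That the resulting bound is the specific rational function in (ii) is made plausible by the companion $\varphi$-identities: from $e^{-\alpha x}=1-\alpha\varphi$ one has $T=\tfrac1\varphi-\alpha$, whence $1-\alpha x-xT=\tfrac{\varphi-x}{\varphi}$ and $x-(1+\alpha x)\varphi=xe^{-\alpha x}-\varphi$, and these are precisely the two factors $(\varphi-x)$ and $[x-(1+\alpha x)\varphi]$ appearing in the denominator of (ii). Substituting the $S$-bound into $g(S)$ and clearing exponentials through $e^{-\alpha x}=1-\alpha\varphi$ should then reproduce the full right-hand side of (ii).

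The main obstacle is precisely this control of the nonlocal $S=F'/F$: it depends on all of $M$ on $(0,x]$, not merely on its value and first derivative at $x$, so only the convexity hypothesis (i) can confine it, and the ensuing reduction of $g(S)$ to the closed rational expression on the right of (ii) is a lengthy but essentially mechanical computation governed by $\varphi'=1-\alpha\varphi$. Once the sharp bound coming from (i) is pinned down and shown to be exactly the threshold in (ii), the inequality $x\,M'/M\ge g(S)$ holds and $\Psi'\ge 0$ follows, completing the proof.
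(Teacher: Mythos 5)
Your reduction is correct as far as it goes: your $\Psi'(x)=\frac{d}{dx}\bigl(x\Phi'(x)\bigr)$ is exactly the paper's operator $D(h)-D(\varphi)$ from Lemmas~\ref{2} and~\ref{3}, and your Riccati computation giving $\Psi'=(S-T)\bigl[(1-\alpha x)-x(S+T)\bigr]+xS\,M'/M$ is a correct, equivalent form of Lemma~\ref{5}(b). But the proposal stops precisely where the theorem's content begins. You assert that hypothesis (i) ``must furnish an explicit upper bound for $S$'' and that substituting it into $g(S)$ ``should reproduce'' the right-hand side of (ii), yet you supply neither the bound nor the computation, and the mechanism you sketch does not work as stated. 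What (i) actually yields is $M(t)\ge M(x)(t/x)^{y(x)}$ for $t\le x$ (with $y=xM'/M$ nondecreasing), hence $S(x)\le e^{-\alpha x}x^{y}\Big/\int_0^x t^{y}e^{-\alpha t}\,dt$ --- an incomplete-gamma-type expression, not the rational function of $(x,\varphi,\alpha)$ in (ii), and there is no ``mechanical'' passage from one to the other. Worse, even granting an upper bound on $S$, it does not bound $g(S)$: writing $g(S)=xS-(1-\alpha x)+\frac{T(\varphi-x)}{\varphi S}$ gives $g'(S)=x-\frac{T(\varphi-x)}{\varphi S^2}$, whose sign is not constant a priori, so $g$ need not be monotone on the relevant range and a one-sided bound on $S$ yields no conclusion about $g(S)$.

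The paper circumvents exactly this obstruction by never bounding the nonlocal quantity pointwise. It treats $\Delta\sim -A(h/M)^2+B(h/M)-C$ as a quadratic in the ratio $h/M$; the upper-root inequality is elementary ($h\le M\varphi$ since $M$ increases), while the lower-root inequality is proved \emph{dynamically}: one sets $\delta=h-M\,\frac{B-\sqrt{B^2-4AC}}{2A}$, checks $\delta\to0$ as $x\to0^+$, and establishes the differential inequality $\delta'\ge0$. It is inside $\delta'\sim d_1=\left(\frac{xM'}{M}-\frac{xA'}{A}\right)(B-S)+2xA'\varphi+2xD(M(x))$ that the hypotheses enter: (i) is used only to discard the term $2xD(M)\ge0$, and the remainder factors as $d_2\sim y-y_0$ with $y_0$ \emph{exactly} the right-hand side of (ii); the sign of $y_0$ is then governed by the unique root $x_0=t_0/(-\alpha)$ of $\varphi-x(1-\alpha x)$, splitting $(0,x_0)$ (where no condition is needed) from $I\cap(x_0,\infty)$ (where (ii) is invoked). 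Your proposal contains no analogue of this initial-value/monotonicity device, no two-regime analysis at $x_0$, and no derivation identifying (ii) as the precise threshold --- and these steps constitute essentially the entire proof, so the gap is genuine.
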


As a straightforward consequence of Theorem \ref{t1}, the following   logarithmic  convexity for $\mathsf{M}_{p,\alpha}(f,\cdot)$ is similar to Corollary 8 in \cite{WX}.
\begin{cor}\label{c1}
Let $(\alpha,p)\in (-\infty,0)\times(0,\infty)$. If $f:\mathbb C\to\mathbb C$ is an entire function, then $r\mapsto \ln  \mathsf{M}_{p,\alpha}(f,r)$
is convex in $ \ln r$ for $r\in\left(0, \sqrt{\frac{t_0}{-\alpha}}\right)$, where $t_0=1.79\cdots$ is the unique root of $u(t)=e^{t}-1-t-t^2$ on $(0,\infty)$.
\end{cor}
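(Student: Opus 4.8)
The plan is to read the corollary as the entire-function instance of Theorem \ref{t1}, once the Gaussian mean is rewritten in the one-variable form used there. First I would pass to polar coordinates $z=se^{i\theta}$ and substitute $t=s^2$, $x=r^2$; both the numerator and the denominator of $\mathsf{M}_{p,\alpha}(f,r)$ then collapse to $\pi\int_0^x(\cdots)e^{-\alpha t}\,dt$, the factor $\pi$ cancels, and with
$$M(t)=\frac{1}{2\pi}\int_0^{2\pi}\big|f(\sqrt{t}\,e^{i\theta})\big|^p\,d\theta$$
one gets exactly
$$\mathsf{M}_{p,\alpha}(f,r)=\frac{\displaystyle\int_0^xM(t)e^{-\alpha t}\,dt}{\displaystyle\int_0^xe^{-\alpha t}\,dt},\qquad x=r^2.$$
Because $\ln x=2\ln r$, convexity in $\ln x$ and convexity in $\ln r$ are the same property, so it suffices to apply Theorem \ref{t1} on the $x$-interval $I=\big(0,\,t_0/(-\alpha)\big)$, which corresponds to $r\in\big(0,\sqrt{t_0/(-\alpha)}\big)$.

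Next I would verify the hypotheses. The case of constant $f$ is trivial, since then $\mathsf{M}_{p,\alpha}(f,r)$ is constant and its logarithm is both convex and concave, so I may assume $f$ is non-constant. Then $M(x)>0$ for $x>0$, and the strict-monotonicity part of Hardy's convexity theorem shows $s\mapsto\frac{1}{2\pi}\int_0^{2\pi}|f(se^{i\theta})|^p\,d\theta$ is strictly increasing, whence $M'(x)>0$; the same theorem gives that $\ln M$ is convex in $\ln x$, which is condition (i). The only substantive point left is condition (ii), and my strategy is to prove that its right-hand side is $\le 0$ on all of $\big(0,t_0/(-\alpha)\big)$: since $M,M'>0$ force the left-hand side $xM'(x)/M(x)\ge 0$, inequality (ii) then holds automatically.

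The crux is therefore the sign analysis of the right-hand side of (ii). Setting $t=-\alpha x>0$, so that $\varphi(x)=(e^t-1)/(-\alpha)$, I would factor that expression and compute each piece. The decisive identity is
$$x(1-\alpha x)-\varphi(x)=\frac{u(t)}{\alpha},\qquad u(t)=e^t-1-t-t^2,$$
so this first factor is positive for $t\in(0,t_0)$ (there $u<0$ and $\alpha<0$) and vanishes exactly at $t=t_0$, which is precisely what pins the endpoint $\sqrt{t_0/(-\alpha)}$. The remaining three factors keep a fixed sign on all of $(0,\infty)$: direct calculation gives
$$\alpha\varphi(x)^2-2(1+\alpha x)\varphi(x)+2x=\frac{-(e^{2t}-2te^t-1)}{-\alpha}<0,$$
together with $\varphi(x)-x=(e^t-1-t)/(-\alpha)>0$ and $x-(1+\alpha x)\varphi(x)=(1+(t-1)e^t)/(-\alpha)>0$. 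Hence on $\big(0,t_0/(-\alpha)\big)$ the right-hand side of (ii) has the sign $(+)(-)/[(+)(+)]<0$, as required. The main obstacle is exactly this bookkeeping: correctly reducing all four factors under $t=-\alpha x$ and certifying their signs, in particular recognizing the exponential polynomial $u(t)=e^t-1-t-t^2$ whose unique positive root $t_0$ is the only one of the four factors that changes sign and hence governs the endpoint. Once the sign of (ii) is settled, Theorem \ref{t1} applies on $I=\big(0,t_0/(-\alpha)\big)$ and yields the claimed logarithmic convexity of $r\mapsto\ln\mathsf{M}_{p,\alpha}(f,r)$ in $\ln r$.
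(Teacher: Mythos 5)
Your proposal is correct and follows essentially the same route as the paper: reduce via $x=r^2$ to the quotient form, invoke Hardy's convexity theorem for hypothesis (i) and the positivity of $M$, $M'$, and observe that condition (ii) of Theorem \ref{t1} holds automatically on $\left(0,t_0/(-\alpha)\right)$ because the right-hand side is negative there. Your sign bookkeeping under $t=-\alpha x$ reproduces exactly the paper's own ingredients --- parts (b), (d), (e) of Lemma \ref{4} for the three fixed-sign factors and the analysis of $u(t)=e^t-1-t-t^2$ (equivalently, $y_0\sim\varphi-x(1-\alpha x)\le 0$ on $(0,x_0)$) carried out inside the proof of Theorem \ref{t1}.
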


During the process of extending Theorem \ref{t1} from $I$ to $(0,\infty)$, we find the following assertion.

\begin{thm}\label{t2} Let $\alpha<0$. Suppose both $x\mapsto M(x)$ and $x\mapsto M'(x)$, are positive on $(0,\infty)$. Then the function
$$x\mapsto  \ln\frac{\displaystyle\int_0^xM(t)e^{-\alpha t}dt}
{\displaystyle\int_0^xe^{-\alpha t} dt}$$
is convex in $ \ln x$ for $x\in(0,\infty)$ provided that $$
\left(x\frac{M'(x)}{M(x)}\right)'\ge\begin{cases}
0,\qquad x\in (0,x_0);\\
 \frac{[x(1-\alpha x)-\varphi(x)]^2}{4x(\varphi(x)-x)^2}, \qquad x\in[x_0,\infty),
\end{cases}
$$
where $x_0=-t_0/\alpha$ is the unique root of $x(1-\alpha x)-\varphi(x)$ on $(0,\infty)$.
\end{thm}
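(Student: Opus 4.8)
The plan is to pass to the variable $s=\ln x$ and show that $F(x)=\ln\frac{N(x)}{\varphi(x)}$, with $N(x)=\int_0^xM(t)e^{-\alpha t}\,dt$ and $\varphi(x)=\int_0^xe^{-\alpha t}\,dt$, satisfies $\frac{d^2}{ds^2}F\ge0$ for all $s\in\mathbb R$. Writing $\dot{}=\frac{d}{ds}=x\frac{d}{dx}$ and setting $m=\dot N/N$, $n=\dot\varphi/\varphi$, $h=\dot M/M=xM'/M$ and $\sigma=1-\alpha x$, I first record that $m,n,h>0$, that $y:=\dot F=m-n=xe^{-\alpha x}\frac{M\varphi-N}{N\varphi}>0$ because $M$ is increasing (so $M\varphi-N=\int_0^x(M(x)-M(t))e^{-\alpha t}\,dt>0$), and that $y\to0$ and $\dot y\to0^+$ as $s\to-\infty$. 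A direct differentiation gives the Riccati-type identities $\dot n=n(\sigma-n)$ and $\dot m=m(\sigma+h-m)$, whence $g:=\ddot F=\dot y=y(\sigma-y-2n)+(y+n)h$. The whole difficulty is that $m$ (hence $y$) is a nonlocal average of $M$, so $g\ge0$ cannot be read off pointwise from $h$; the remedy is to control $g$ only at its own zeros.

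On $(0,x_0)$ I would simply invoke Theorem \ref{t1} with $I=(0,x_0)$. Its hypothesis (i) is precisely $h'\ge0$, which is assumed here, and for (ii) I note that $x(1-\alpha x)-\varphi(x)>0$ on $(0,x_0)$ (it vanishes only at $x_0$), while a sign check on the remaining factors shows that the right-hand side of (ii) is then negative, so it is dominated by $h>0$. Hence $F$ is convex in $\ln x$ on $(0,x_0)$, i.e. $g\ge0$ there, and by continuity $g(x_0)\ge0$.

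For $[x_0,\infty)$ the plan is the inflection-point method: show $\dot g\ge0$ at every $s$ with $g(s)=0$. At such a point $y(\sigma-y-2n)+(y+n)h=0$ forces $h=\frac{y(y+2n-\sigma)}{y+n}$, and differentiating $g$ once more and using this relation to eliminate $h$ yields, after multiplication by $y+n>0$,
\[
(y+n)\dot g=-dy(y+\sigma)+\tau y(y+n)+(y+n)^2\dot h,\qquad d:=\dot n=n(\sigma-n),\ \ \tau:=-\alpha x.
\]
The hypothesis gives $\dot h=xh'\ge\frac{P(x)^2}{4(\varphi-x)^2}=:C_0$ with $P(x)=x(1-\alpha x)-\varphi(x)$, so the right-hand side is bounded below by the quadratic $Ay^2+By+D$ with $A=\tau-d+C_0$, $B=\tau n-d\sigma+2C_0n$, $D=C_0n^2$. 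Here $A>0$ (since $\tau-d=\frac{t\,(1+(t-1)e^t)}{(e^t-1)^2}>0$) and $D\ge0$, so the point is to check that the discriminant is nonpositive. Substituting $t=-\alpha x$, the explicit value $n=\frac{te^t}{e^t-1}$ produces the two clean identities $\tau n-d\sigma=-\frac{u(t)\,n}{e^t-1}$ and $\sigma-n=\frac{e^t-1-t}{e^t-1}$, where $u(t)=e^t-1-t-t^2$ so that $P=u/\alpha$; a short computation with these then shows the discriminant equals $4n^2(\sigma-n)^2\big(\frac{P^2}{4(\varphi-x)^2}-C_0\big)$, which vanishes for the threshold value $C_0=\frac{P^2}{4(\varphi-x)^2}$ dictated by the hypothesis. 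Thus $Ay^2+By+D$ is a nonnegative perfect square, so $(y+n)\dot g\ge0$ and hence $\dot g\ge0$ whenever $g=0$.

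Finally, since $g(x_0)\ge0$ and $\dot g\ge0$ at every zero of $g$, the function $g$ cannot cross zero downward, so $g\ge0$ on $[x_0,\infty)$ and therefore on all of $(0,\infty)$, which is the asserted convexity. I expect the main obstacle to be twofold. First, organizing the computation so that the nonlocal dependence on $M$ disappears: it is exactly the relation $h=\frac{y(y+2n-\sigma)}{y+n}$, valid only at zeros of $g$, that turns the second-order condition into a single-variable quadratic whose discriminant is pinned by the stated bound on $h'$. Second, the borderline case in which the discriminant is $0$ and $g$ could touch zero tangentially, so that $\dot g\ge0$ is not strict; here I would close the argument by perturbing $M$ to $M_\delta$ (multiplying by a factor whose logarithmic derivative raises $h'$) so that the inequality becomes strict, deduce convexity for each $M_\delta$, and let $\delta\to0$, using that a pointwise limit of functions convex in $\ln x$ is again convex in $\ln x$.
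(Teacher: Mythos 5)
Your proposal is correct, but it takes a genuinely different route from the paper's. The paper never works with $g=\ddot F$ directly: via its Lemmas \ref{2}, \ref{3} and \ref{5} it reduces convexity to the sign of $\Delta(x)\sim -A\,(h/M)^2+B\,(h/M)-C$, a quadratic inequality in the nonlocal quantity $h/M$ (the paper's $h$ is your $N$), proves the upper-root half of the inequality outright, and handles the lower root by showing that $\delta=h-M\,\frac{B-\sqrt{B^2-4AC}}{2A}$ satisfies $\delta(0^+)=0$ and $\delta'\ge0$; the sign of $\delta'$ reduces to $d_1=d_2+2xD(M)$, and the key step is minimizing $d_2$ over $y=xM'/M$ (your $h$, treated as a free variable), with explicit minimum $d_2(y^*)=-\frac12\bigl(1+\frac{\alpha x^2}{\varphi-x}\bigr)^2$, which the hypothesis exactly compensates since $D(M)=(xM'/M)'$. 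You instead run a first-crossing (inflection-point) argument on $g$ itself, use Riccati identities to eliminate $xM'/M$ at zeros of $g$, and encode the hypothesis as a discriminant condition on a quadratic in $y=\dot F$ --- a different variable from the paper's $y$. I verified your computations ($\dot n=n(\sigma-n)$, the identity $(y+n)\dot g=-dy(y+\sigma)+\tau y(y+n)+(y+n)^2\dot h$ at zeros of $g$, $\tau-d=\frac{t(1+(t-1)e^t)}{(e^t-1)^2}>0$, and the discriminant formula $4n^2(\sigma-n)^2\bigl(\frac{P^2}{4(\varphi-x)^2}-C_0\bigr)$), and they are all right; strikingly, both arguments expose the same threshold $\frac{[x(1-\alpha x)-\varphi]^2}{4x(\varphi-x)^2}$ as an extremal value, in the paper as the minimum of $-d_2/(2x)$ over $xM'/M$ and in yours as the vanishing of a discriminant. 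What each buys: the paper's monotonicity-of-$\delta$ scheme integrates from the boundary value $\delta(0^+)=0$ and so never meets the tangential-touch difficulty that your barrier argument must neutralize by perturbation --- your fix does work (e.g.\ $M_\delta=Me^{\delta x^2/2}$ raises $\dot h$ by $2\delta x^2>0$ while preserving $M_\delta'>0$ and the hypothesis on $(0,x_0)$, and convexity in $\ln x$ survives the pointwise limit $\delta\to0$) but is an extra moving part the paper avoids; in exchange your scheme never manipulates the square root $S=\sqrt{B^2-4AC}$ and makes transparent that the hypothesis is only needed at inflection points. Your treatment of $(0,x_0)$ by invoking Theorem \ref{t1} is legitimate (there $x(1-\alpha x)-\varphi>0$ while $\alpha\varphi^2-2(1+\alpha x)\varphi+2x\le0$, $\varphi-x>0$ and $x-(1+\alpha x)\varphi>0$ by Lemma \ref{4}, so the right side of condition (ii) is $\le0<xM'/M$), and it matches the paper, whose proof of Theorem \ref{t1} already records unconditional convexity on $(0,x_0)$ under hypothesis (i). One minor caveat: your unused side remark that $y\to0$ as $s\to-\infty$ is false under the theorem's bare hypotheses (e.g.\ $M(x)=cx^{\beta}$ with $\beta>0$ gives $y\to\beta$), so it is fortunate that your argument never relies on it.
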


As a by-product of Theorem \ref{t2}, the following corollary extends the   logarithmic  convexity of $\mathsf{M}_{p,\alpha}(f,\cdot)$ from $I$ to $(0,\infty)$.

\begin{cor}\label{c2}
Let $(\alpha,p)\in (-\infty,0)\times(0,\infty)$. If $f:\mathbb C\to\mathbb C$ is an entire function, then $r\mapsto \ln \mathsf{M}_{p,\alpha}(f,r)$
is convex in $ \ln r$ for $r\in(0,\infty)$ if
$$\left(x\frac{M'(x)}{M(x)}\right)'\ge \frac{[x(1-\alpha x)-\varphi(x)]^2}{4x[\varphi(x)-x]^2}, \qquad x\ge x_0,$$where $x=r^2, M(x)=\int_0^{2\pi}|f(\sqrt{x}e^{i\theta})|^p\,d\theta$ and $x_0$ is as the same as above.
\end{cor}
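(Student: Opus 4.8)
The plan is to derive Corollary \ref{c2} directly from Theorem \ref{t2}, after rewriting the Gaussian mean as a quotient of one-dimensional integrals. First I would pass to polar coordinates $z=se^{i\theta}$ and substitute $x=s^{2}$, so that $s\,ds=\tfrac12\,dx$. This turns the numerator and denominator of $\mathsf{M}_{p,\alpha}(f,r)$ into
$$
\int_{\{|z|\le r\}}|f(z)|^{p}e^{-\alpha|z|^{2}}\,dA(z)=\frac12\int_{0}^{r^{2}}M(x)e^{-\alpha x}\,dx,
\qquad
\int_{\{|z|\le r\}}e^{-\alpha|z|^{2}}\,dA(z)=\pi\int_{0}^{r^{2}}e^{-\alpha x}\,dx,
$$
with $M(x)=\int_{0}^{2\pi}|f(\sqrt{x}e^{i\theta})|^{p}\,d\theta$ exactly as in the statement. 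Hence, writing $x=r^{2}$,
$$
\ln\mathsf{M}_{p,\alpha}(f,r)=\ln\frac{\displaystyle\int_{0}^{x}M(t)e^{-\alpha t}\,dt}{\displaystyle\int_{0}^{x}e^{-\alpha t}\,dt}-\ln(2\pi).
$$
Since $\ln x=2\ln r$ is an affine (increasing) function of $\ln r$ and the additive constant $\ln(2\pi)$ is irrelevant to convexity, the sought convexity of $r\mapsto\ln\mathsf{M}_{p,\alpha}(f,r)$ in $\ln r$ is equivalent to the convexity in $\ln x$ of the quotient appearing in Theorem \ref{t2}.

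It then remains to verify the hypotheses of Theorem \ref{t2} for this particular $M$. The constant case $f\equiv c$ is trivial, since then the mean is constant and $\ln\mathsf{M}_{p,\alpha}$ is affine in $\ln r$; so I may assume $f$ non-constant, whence $M(x)>0$ and, because $r\mapsto\int_{0}^{2\pi}|f(re^{i\theta})|^{p}\,d\theta$ is strictly increasing, $M'(x)>0$ on $(0,\infty)$. The essential observation is that Hardy's convexity theorem asserts that $r\mapsto\ln\int_{0}^{2\pi}|f(re^{i\theta})|^{p}\,d\theta$ is convex in $\ln r$, i.e. $x\mapsto\ln M(x)$ is convex in $\ln x$. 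Setting $\phi(u)=\ln M(e^{u})$ with $u=\ln x$, one computes $\phi'(u)=xM'(x)/M(x)$, so the convexity of $\phi$ is precisely the statement that
$$
\left(x\frac{M'(x)}{M(x)}\right)'\ge 0\qquad\text{for all }x\in(0,\infty).
$$
This supplies the first branch ($x\in(0,x_{0})$) of the hypothesis of Theorem \ref{t2} for free, while the assumed inequality for $x\ge x_{0}$ gives the second branch. With both branches established, Theorem \ref{t2} applies and yields the claim.

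The reduction and the positivity checks are routine; the step that does the real work is recognizing that the $(0,x_{0})$ condition of Theorem \ref{t2} is not an extra assumption but is automatically guaranteed by the Hardy log-convexity of the $p$-mean of an entire function, so that the only genuinely substantive hypothesis on $f$ is the quantitative lower bound on $(xM'/M)'$ for $x\ge x_{0}$ — exactly what the corollary imposes. The one point I would take care to justify rather than wave away is the strict positivity of $M'$ (required for $M$ to be admissible in Theorem \ref{t2}), which I would obtain from the strict monotonicity of the integral $p$-means of a non-constant entire function.
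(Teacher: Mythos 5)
Your proposal is correct and follows essentially the same route the paper intends: Corollary \ref{c2} is stated there as a direct by-product of Theorem \ref{t2}, obtained exactly as you do via polar coordinates and the substitution $x=r^2$ (cf.\ Lemma \ref{2}(a)), with the multiplicative constant $\tfrac{1}{2\pi}$ harmless to convexity. Your one substantive addition --- noting that $\bigl(xM'(x)/M(x)\bigr)'=D(M(x))\ge 0$, and hence the $(0,x_0)$ branch of Theorem \ref{t2}'s hypothesis, is automatic from Hardy's convexity theorem for the integral means of an entire function --- is precisely the detail the paper leaves implicit (as it also does in deriving Corollary \ref{c1}), so there is no gap.
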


However, whenever handling the   logarithmic  concavity we have only one situation as follows.

\begin{thm}\label{t3} Let $\alpha\ge 0$. Suppose $M(x)\ \&\ M'(x)$ are positive and $M''(x)$ exists for $x\in(0,\infty)$. If $x\mapsto \ln M(x)$ is concave in $ \ln x$ for $x\in (0,\infty)$, then the function
$$x\mapsto  \ln\frac{\displaystyle\int_0^xM(t)e^{-\alpha t} dt}
{\displaystyle\int_0^x e^{-\alpha t} dt}$$
is also concave in $ \ln x$ for $x\in (0,\infty)$.
\end{thm}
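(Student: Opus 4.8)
The plan is to work throughout with the scaling operator $D=x\frac{d}{dx}$, for which convexity (resp.\ concavity) of a function in $\ln x$ is equivalent to $D^2(\cdot)\ge 0$ (resp.\ $\le 0$): if $s=\ln x$ then $d/ds=D$ and $d^2/ds^2=D^2$. Write $F(x)=\int_0^xM(t)e^{-\alpha t}\,dt$, and observe that the denominator is exactly $\int_0^xe^{-\alpha t}\,dt=\varphi(x)$, so the target is $\ln N$ with $N=F/\varphi$. Using $F'=Me^{-\alpha x}$ and $\varphi'=e^{-\alpha x}$, I would record the three logarithmic derivatives $a=D\ln F=xMe^{-\alpha x}/F$, $b=D\ln\varphi=xe^{-\alpha x}/\varphi$ and $L=D\ln M=xM'/M$, together with the single relation $a=wb$, where $w:=M\varphi/F=a/b$. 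Since $M'>0$, $M$ is increasing and $N$ is a positively weighted average of $M$ over $(0,x)$, so $w\ge 1$.

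A direct computation of $D^2\ln F=a(1-\alpha x+L)-a^2$ and $D^2\ln\varphi=b(1-\alpha x)-b^2$ then yields, after substituting $a=wb$,
$$
D^2\ln N=D^2\ln F-D^2\ln\varphi=b\,Q(w),\qquad Q(w):=(1-\alpha x)(w-1)+wL-b(w^2-1).
$$
As $b>0$, the desired concavity is equivalent to $Q(w)\le 0$. The key structural point is that $Q$ is a downward parabola in $w$ (leading coefficient $-b<0$) with $Q(1)=L>0$; hence its smaller root lies below $1$, while on $[1,\infty)$ one has $Q\le 0$ exactly on $[w_+,\infty)$, where $w_+>1$ is the larger root. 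Thus the whole statement reduces to the sharp lower bound $w\ge w_+$.

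To feed in the hypothesis I would use a tangent--monomial comparison: concavity of $\ln M$ in $\ln x$ means $\ln M$ lies below its tangent at $x$, i.e.\ $M(t)\le M(x)(t/x)^{L}$ for $0<t\le x$. Integrating against $e^{-\alpha t}$ bounds $F$ from above and gives $w\ge\tilde w:=\varphi(x)\,x^{L}\big/\!\int_0^x t^{L}e^{-\alpha t}\,dt$, which is precisely the value of $w$ attached to the monomial $t\mapsto t^{L}$. Because $Q$ depends on $M$ only through $w$ (with $x,\alpha,b,L$ held fixed) and $Q(\tilde w)$ is exactly $D^2\ln\tilde N/b$ for that monomial, the concavity of $Q$ together with $Q(1)>0$ and $w\ge\tilde w$ shows it is enough to prove $Q(\tilde w)\le 0$; that is, \emph{the general case collapses to the extremal (monomial) case}. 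As a consistency check, for $\alpha=0$ one finds $b=1$, $\tilde w=L+1$ and $Q(w)=w\,[\,L-(w-1)\,]$, so $w\ge\tilde w=L+1$ closes the case and exhibits monomials as the exact borderline $Q(\tilde w)=0$ (log--linear input produces log--linear output).

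The main obstacle is the extremal case for $\alpha>0$, where the weight $e^{-\alpha t}$ destroys homogeneity. Substituting $u=\alpha x$ turns $\tilde N$ into a ratio of lower incomplete gamma functions, $\tilde N\propto\gamma(\beta+1,u)/\gamma(1,u)$ with $\beta=L$ and $\gamma(1,u)=1-e^{-u}$, so $Q(\tilde w)\le 0$ becomes the assertion that $u\mapsto\ln\bigl(\gamma(\beta+1,u)/\gamma(1,u)\bigr)$ is concave in $\ln u$ for every $\beta>0$. I expect this special-function log-concavity to be the heart of the argument; the natural route is to differentiate it twice and reduce to a positivity/correlation inequality for the measures $t^{\beta}e^{-\alpha t}\,dt$ on $(0,x)$, exploiting that $\alpha\ge 0$ makes $e^{-\alpha t}$ nonincreasing, supplemented by the limiting behaviour $w\to 1$, $L\to 0$ as $x\to 0^+$ and the flattening of $\ln N$ as $x\to\infty$.
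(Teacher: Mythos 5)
Your reduction is correct as far as it goes, and it is genuinely different from the paper's route: the identity $D^2\ln N=b\,Q(w)$ with $Q$ a downward parabola in $w=M\varphi/F$, the tangent-line bound $M(t)\le M(x)(t/x)^{L}$ giving $w\ge\tilde w\ge 1$, and the resulting collapse to the monomial case are all sound (and the $\alpha=0$ case is fully closed). But the proof has a genuine gap exactly where you say you ``expect'' the heart of the argument to lie: the extremal inequality $Q(\tilde w)\le 0$ for $\alpha>0$, i.e.\ the concavity of $u\mapsto\ln\bigl(\gamma(\beta+1,u)/(1-e^{-u})\bigr)$ in $\ln u$ for every $\beta>0$, is never proved — only a strategy (``differentiate twice and reduce to a positivity/correlation inequality'') is sketched. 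That inequality is not an off-the-shelf fact; it is precisely the monomial instance of the theorem itself (the paper's Corollary \ref{c3}, which the paper \emph{deduces from} Theorem \ref{t3}), so as written your argument establishes only the equivalence ``general log-concave $M$ $\Longleftrightarrow$ monomials,'' not the theorem. Moreover the deferred inequality is genuinely nontrivial: writing $G_\beta(u)=\int_0^u t^\beta e^{-t}\,dt$, one must show $D\ln G_\beta-D\ln G_0$ is nonincreasing in $\ln u$, and no standard correlation inequality hands this over directly.

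It is worth noting that the paper's own proof shows the reduction to an extremal case is unnecessary when $\alpha\ge 0$. The paper works with $\Delta(x)=D(h)-D(\varphi)\sim -A(h/M)^2+B(h/M)-C$ and, since $A<0$ for $\alpha>0$, only needs the one-sided bound $h/M\le (B-S)/(2A)+\cdots$, established by a barrier argument ($\delta(0)=0$, $\delta'\le 0$). After the substitutions $\delta'\sim d_1\le d_2\sim d_3$ (the first inequality using $D(M)\le 0$, your hypothesis), the key point is that for $\alpha\ge 0$ the coefficient $\alpha/(\varphi A')$ of $y=xM'/M$ in $d_3$ is $\le 0$ by Lemma \ref{4}(e), so the $y$-term can simply be discarded, and the elementary estimate $S\ge\bigl|2x/\varphi-1-\alpha x\bigr|$ from the proof of Lemma \ref{5}(a) finishes: $d_3\le 0$ for \emph{every} $y>0$, monomial or not. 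In your notation this says that the favorable sign structure at $\alpha\ge 0$ makes $Q(w)\le 0$ provable by direct elementary bounds on $\varphi$ (Lemma \ref{4}(d),(e)) without locating the root $w_+$ or identifying the extremal $M$. So to repair your proof, either carry out the incomplete-gamma log-concavity in full, or — more efficiently — replace the extremal reduction by sign estimates of the paper's Lemma \ref{4} type applied directly to $Q$.
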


Note that for any nonnegative integer $k$ the classical integral mean of $z^k$ is both   logarithmic  convex and   logarithmic  concave. So, we obtain the following corollary, which is part (i) of \cite[Theorem 7]{WX}.

\begin{cor}\label{c3}
Let $(\alpha,p)\in [0,\infty)\times(0,\infty)$. If $k$ is a nonnegative integer, then the function
$r\mapsto \ln  \mathsf{M}_{p,\alpha}(z^k,r)$ is concave in $ \ln r$ for $r\in (0,\infty)$.
\end{cor}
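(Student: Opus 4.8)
The plan is to recognize $\mathsf{M}_{p,\alpha}(z^k,\cdot)$ as a concrete instance of the ratio appearing in Theorem \ref{t3} and then merely check that its radial profile is log-linear. First I would pass to polar coordinates $z=se^{i\theta}$ in both the numerator and denominator of $\mathsf{M}_{p,\alpha}(z^k,r)$ and perform the substitution $x=s^2$. Writing $M(x)=\int_0^{2\pi}|f(\sqrt{x}e^{i\theta})|^p\,d\theta$ as in Corollary \ref{c2}, this yields
$$
\mathsf{M}_{p,\alpha}(f,r)=\frac{1}{2\pi}\cdot\frac{\displaystyle\int_0^{x}M(t)e^{-\alpha t}\,dt}{\displaystyle\int_0^{x}e^{-\alpha t}\,dt},\qquad x=r^2,
$$
so that, up to the additive constant $-\ln(2\pi)$ from the normalisation, $\ln\mathsf{M}_{p,\alpha}(f,r)$ is exactly the function studied in Theorem \ref{t3}, evaluated at $x=r^2$. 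For $f(z)=z^k$ the angular integral collapses to the monomial profile $M(x)=2\pi\,x^{kp/2}$.

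Next, assuming $k\ge 1$, I would verify the hypotheses of Theorem \ref{t3}: since $kp/2>0$, the functions $M(x)=2\pi x^{kp/2}$ and $M'(x)=\pi kp\,x^{kp/2-1}$ are positive on $(0,\infty)$ and $M''$ exists there; moreover $\ln M(x)=\ln(2\pi)+\tfrac{kp}{2}\ln x$ is \emph{linear}, hence concave, in $\ln x$. This is precisely the observation recorded just before the statement, namely that the classical integral mean of $z^k$ is simultaneously log-convex and log-concave. With $\alpha\ge 0$ in force, Theorem \ref{t3} then gives that $x\mapsto\ln\frac{\int_0^x M(t)e^{-\alpha t}\,dt}{\int_0^x e^{-\alpha t}\,dt}$ is concave in $\ln x$ for $x\in(0,\infty)$.

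Finally I would transfer this concavity back to the variable $\ln r$. Since $x=r^2$ gives $\ln x=2\ln r$, concavity in $\ln x$ is equivalent to concavity in $\ln r$ (the passage $\ln r\mapsto\ln x$ being affine with positive slope), and the additive constant $-\ln(2\pi)$ is harmless; this settles the claim for $k\ge 1$. The only point demanding care—the main obstacle, such as it is—is the degenerate case $k=0$, where $M'\equiv 0$ violates the positivity hypothesis of Theorem \ref{t3}. But $z^0\equiv 1$ forces $\mathsf{M}_{p,\alpha}(1,r)\equiv 1$, so $\ln\mathsf{M}_{p,\alpha}(1,r)\equiv 0$ is trivially concave and the case is disposed of by direct inspection. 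Combining the two cases yields Corollary \ref{c3}.
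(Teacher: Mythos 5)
Your proof is correct and follows essentially the same route as the paper: reduce via polar coordinates and $x=r^2$ to the ratio in Theorem \ref{t3} with $M(x)=2\pi x^{kp/2}$, observe that $\ln M$ is linear (hence concave) in $\ln x$ --- exactly the remark the paper makes just before the corollary --- and transfer concavity back through the affine change $\ln x=2\ln r$ (Lemma \ref{2}(a)). Your separate treatment of $k=0$, where $M'\equiv 0$ fails the positivity hypothesis of Theorem \ref{t3} but the mean is identically $1$, is a small point of care that the paper silently omits.
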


\medskip
{\it Notation}. In the forthcoming sections, we will employ the symbol $\equiv$ when a new notation is introduced, but also use the notation $U\sim V$ when $U$ and $V$ have the same sign.

\section{Four Lemmas}\label{s2}

This section collects four lemmas which will be used in the proofs of Theorems \ref{t1}-\ref{t2}-\ref{t3}.

The first two lemmas come from \cite{WX}
\begin{lem}\label{2}
Suppose $f$ is positive and twice differentiable on $(0,\infty)$. Then
\begin{enumerate}
\item[(a)] $f(x)$ is convex in $ \ln x$ if and only if $f(x^2)$ is convex in $ \ln x$ and $f(x)$ is concave in $ \ln x$ if and only if $f(x^2)$ is concave in $ \ln x$.
\item[(b)] Let
$$D(f(x))\equiv\frac{f'(x)}{f(x)}
+x\frac{f''(x)}{f(x)}-x\left(\frac{f'(x)}{f(x)}\right)^2.$$ Then $ \ln f(x)$ is convex in $ \ln x$ if and only if
$D(f(x))\ge 0$ and $ \ln f(x)$ is concave in $ \ln x$ if and only if
$D(f(x))\le 0$ for all $x\in(0,\infty)$.
\end{enumerate}
\end{lem}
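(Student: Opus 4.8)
The plan is to reduce both assertions to elementary facts about ordinary convexity by passing to the logarithmic variable. Throughout I would set $t=\ln x$, equivalently $x=e^{t}$, so that a positive function $g$ being convex (resp.\ concave) in $\ln x$ means precisely that $t\mapsto g(e^{t})$ is convex (resp.\ concave) in the usual sense on $\mathbb{R}$. This single change of variable is the engine behind both parts.

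For part (a), I would observe that after the substitution the hypothesis ``$f(x)$ is convex in $\ln x$'' reads ``$t\mapsto f(e^{t})$ is convex'', while ``$f(x^{2})$ is convex in $\ln x$'' reads ``$t\mapsto f(e^{2t})$ is convex''. Since $t\mapsto 2t$ is an affine bijection of $\mathbb{R}$ and convexity (and likewise concavity) is preserved in both directions under affine reparametrization of the domain, the two statements are equivalent. Because $f$ is assumed twice differentiable, one may also argue directly by the chain rule: writing $H(s)=f(e^{s})$ and $G(t)=f(e^{2t})=H(2t)$ gives $G''(t)=4\,H''(2t)$, so the sign of $G''$ at $t$ matches that of $H''$ at $2t$, which yields the equivalence at once for both the convex and concave cases.

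For part (b), I would compute the second derivative of $g(t)=\ln f(e^{t})$ in $t$ and identify it with $D(f(x))$ up to a positive factor. A first differentiation gives
\[
g'(t)=x\,\frac{f'(x)}{f(x)},\qquad x=e^{t}.
\]
Differentiating once more in $t$, using $dx/dt=x$ together with the quotient rule on $f'/f$, I expect every term to assemble into
\[
g''(t)=x\left[\frac{f'(x)}{f(x)}+x\frac{f''(x)}{f(x)}-x\left(\frac{f'(x)}{f(x)}\right)^{2}\right]=x\,D(f(x)).
\]
Because $x=e^{t}>0$, the sign of $g''(t)$ coincides with the sign of $D(f(x))$; in the paper's notation, $g''(t)\sim D(f(x))$. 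Hence $g$ is convex in $t$ (i.e.\ $\ln f$ is convex in $\ln x$) exactly when $g''\ge 0$ on all of $(0,\infty)$, that is, when $D(f(x))\ge 0$ there, and the concave case is the mirror statement with reversed inequalities.

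The computation is entirely routine, so there is no genuine obstacle here; the only point requiring care is the bookkeeping in the second differentiation, where I must confirm that the cross terms collapse so that $g''(t)$ equals exactly $x\,D(f(x))$ with no residual contribution. Once that identity is verified, both halves of (b) follow immediately from the characterization of convex and concave functions by the sign of the second derivative, while part (a) is the affine-invariance remark above.
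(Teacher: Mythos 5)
Your proof is correct: the chain-rule computation indeed yields $g''(t)=x\,D(f(x))$ with $x=e^{t}>0$, and the affine reparametrization $t\mapsto 2t$ settles part (a) in both directions. Note that the paper itself supplies no proof of this lemma (it is quoted from \cite{WX}), so there is nothing to diverge from; your argument is the standard one that the cited source relies on, and it is complete.
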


\begin{lem}\label{3}
Suppose $f=f_1/f_2$ is a quotient of two positive and twice differentiable functions
on $(0,\infty)$. Then
$$D(f(x))=D(f_1(x))-D(f_2(x))$$
for $x\in(0,\infty)$. Consequently, $ \ln f(x)$ is convex in $ \ln x$ if and only if
$$D(f_1(x))-D(f_2(x))\ge 0$$
on $(0,\infty)$ and $ \ln f(x)$ is concave in $ \ln x$ if and only if
$$D(f_1(x))-D(f_2(x))\le 0$$
on $(0,\infty)$.
\end{lem}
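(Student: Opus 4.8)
The plan is to reduce the whole statement to a single structural observation about the operator $D$, after which both the identity and its corollary become immediate. First I would rewrite $D$ in a factored form: a direct application of the product and quotient rules shows that, for any positive twice differentiable $g$ on $(0,\infty)$,
$$D(g(x))=\frac{d}{dx}\left(x\,\frac{g'(x)}{g(x)}\right)=\frac{d}{dx}\Big(x\,(\ln g)'(x)\Big),$$
because expanding the right-hand side returns exactly $\frac{g'}{g}+x\frac{g''}{g}-x\big(\frac{g'}{g}\big)^2$. Thus $D$ is the composition of three operations: pass from $g$ to its logarithmic derivative $g'/g$, multiply by $x$, and differentiate in $x$. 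The virtue of this reformulation is that the first operation converts quotients into differences.

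With this in hand the identity $D(f)=D(f_1)-D(f_2)$ is almost automatic. Since $f=f_1/f_2$ with $f_1,f_2$ positive and twice differentiable, the quotient $f$ is again positive and twice differentiable, and additivity of the logarithmic derivative gives
$$\frac{f'(x)}{f(x)}=\frac{f_1'(x)}{f_1(x)}-\frac{f_2'(x)}{f_2(x)}.$$
Multiplying by $x$ and differentiating, and invoking the linearity of $\frac{d}{dx}(x\,\cdot\,)$ together with the factored form of $D$, yields $D(f(x))=D(f_1(x))-D(f_2(x))$ for every $x\in(0,\infty)$.

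The ``consequently'' clause then needs no further computation. By Lemma \ref{2}(b), $\ln f(x)$ is convex in $\ln x$ precisely when $D(f(x))\ge 0$ throughout $(0,\infty)$, and the established identity rewrites this as $D(f_1(x))-D(f_2(x))\ge 0$; the concave case is identical with the inequality reversed. Conceptually this is just the remark that under the substitution $y=\ln x$ one has $\ln f(e^y)=\ln f_1(e^y)-\ln f_2(e^y)$, so the second derivatives in $y$ subtract, and the factored form of $D$ is exactly $x^{-1}$ times that second derivative.

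I do not anticipate any genuine obstacle: the only points to track are that positivity of $f_1,f_2$ forces $f>0$, so that Lemma \ref{2}(b) applies to $f$ itself, and that twice differentiability is what lets us form $D(f)$. The single piece of bookkeeping is the verification of the factored expression for $D$, but that is a one-line differentiation.
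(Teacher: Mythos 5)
Your proof is correct and complete: the factored form $D(g)=\frac{d}{dx}\bigl(x\,(\ln g)'(x)\bigr)$ checks out by a one-line expansion, additivity of logarithmic derivatives under quotients then gives $D(f)=D(f_1)-D(f_2)$ immediately, and the convexity/concavity equivalences follow by applying Lemma \ref{2}(b) to $f$, which is indeed positive and twice differentiable. Note that the paper itself imports this lemma from \cite{WX} without reproducing a proof, and your argument is the standard one behind it --- in the variable $y=\ln x$ one has $D(g)=x^{-1}\frac{d^2}{dy^2}\ln g(e^y)$, so logarithms turn the quotient into a difference and the second derivatives subtract --- hence there is nothing to add.
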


We next establish several estimates for the function $\varphi$.

\begin{lem}\label{4}
Suppose
$$
\begin{cases}
\alpha\in\mathbb{R};\\
x\in[0,\infty);\\
\varphi=\varphi(x)\equiv\int_0^re^{-\alpha t^2}\,2tdt=\int_0^xe^{-\alpha t}\,dt=\frac{1}{-\alpha}\left(e^{-\alpha x}-1\right).
\end{cases}
$$
Then
\begin{enumerate}
\item[(a)] $1-\alpha \varphi(x)=\varphi'(x)$.
\item[(b)] $\varphi(x)-x\ge0$ when $\alpha\le0$ and $\varphi(x)-x\le0$ when $\alpha\ge0$.
\item[(c)] $g_1(x)\equiv x(1-\alpha x)-\varphi(x)\le0$ when $\alpha\ge0$.
\item[(d)] $g_2(x)\equiv \alpha\varphi^2(x)-2(1+\alpha x)\varphi(x)+2x\le0$.
\item[(e)] $g_3(x)\equiv x-(1+\alpha x)\varphi(x)$ is nonnegative when $\alpha\le0$ and not positive when $\alpha\ge0$.
\end{enumerate}
\end{lem}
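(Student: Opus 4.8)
The plan is to handle all five parts through a single device: write each quantity as a function that vanishes at $x=0$ and then control the sign of its first (or second) derivative, after first recording the two identities that drive everything. The fundamental theorem of calculus applied to $\varphi(x)=\int_0^x e^{-\alpha t}\,dt$ gives $\varphi'(x)=e^{-\alpha x}$, while the closed form $\varphi(x)=\frac{1-e^{-\alpha x}}{\alpha}$ yields $1-\alpha\varphi(x)=e^{-\alpha x}$. Comparing these two is exactly the assertion of part (a), and the resulting relation $\varphi'=1-\alpha\varphi$ is what I will use repeatedly to purge exponentials from the later computations.

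For part (b) I set $h(x)=\varphi(x)-x$, observe $h(0)=0$, and compute $h'(x)=\varphi'(x)-1=e^{-\alpha x}-1$, which has the sign of $-\alpha$ on $(0,\infty)$. Hence $h$ increases when $\alpha\le0$ and decreases when $\alpha\ge0$, and integrating from the zero at the origin gives the stated signs. Part (c) is the same idea one derivative deeper: $g_1(0)=0$ and $g_1'(x)=1-2\alpha x-e^{-\alpha x}$ also vanishes at $x=0$, so I differentiate once more to get $g_1''(x)=\alpha(e^{-\alpha x}-2)$. When $\alpha\ge0$ the factor $e^{-\alpha x}-2$ is negative, so $g_1''\le0$, which forces $g_1'\le0$ and then $g_1\le0$.

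The hard part will be part (d), because the sign of $g_2$ must come out right for \emph{every} $\alpha$, not just one sign. My approach is to differentiate $g_2$ and reduce using $\varphi'=1-\alpha\varphi$ until the clutter collapses; the anticipated payoff is the factorization
\[
g_2'(x)=2\alpha\,(\varphi(x)-x)\,\varphi'(x),
\]
after which part (b) finishes the argument: in both cases $\alpha(\varphi(x)-x)\le0$ while $\varphi'>0$, so $g_2'\le0$, and since $g_2(0)=0$ we conclude $g_2\le0$ independently of the sign of $\alpha$. Checking that the expanded derivative genuinely telescopes into this single product is the one computation I would carry out in full rather than leave to the reader, since it is the crux of the whole lemma.

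Finally, part (e) mirrors part (d) but is cleaner: differentiating $g_3$ and simplifying by $\varphi'=1-\alpha\varphi$ should collapse to $g_3'(x)=-\alpha x\,e^{-\alpha x}$, whose sign is exactly that of $-\alpha$; together with $g_3(0)=0$ this yields nonnegativity for $\alpha\le0$ and nonpositivity for $\alpha\ge0$. The degenerate value $\alpha=0$, where $\varphi(x)=x$, makes each $g_i$ identically zero and so can be dispatched by inspection or by continuity in $\alpha$.
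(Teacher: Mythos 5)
Your proposal is correct and takes essentially the same route as the paper's own proof: the identity $\varphi'=1-\alpha\varphi$ from part (a), the second-derivative argument $g_1''=\alpha(\varphi'-2)\le 0$ for part (c), the key factorizations $g_2'=2\alpha(\varphi-x)\varphi'$ and $g_3'=-\alpha x\varphi'$ (both of which you state correctly and which do telescope as you anticipate), and the monotonicity-from-the-zero-at-the-origin device are all exactly the paper's steps, including the use of part (b) to make $\alpha(\varphi-x)\le0$ handle both signs of $\alpha$ simultaneously in (d). The only cosmetic difference is that you prove (b) by differentiating $\varphi(x)-x$ rather than by comparing the integrand $e^{-\alpha t}$ with $1$, which is equivalent.
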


\begin{proof}
Part (a) follows from the fact that
$$\varphi(x)=\frac{1}{-\alpha}\left(e^{-\alpha x}-1\right),\quad \varphi'(x)=e^{-\alpha x}.$$

Part (b) follows from the fact that $e^{-\alpha x}\ge1$ for $\alpha\le0$ and $x\in[0,\infty)$.

A direct computation shows that
$$g_1'(x)=1-2\alpha x-\varphi'(x)$$
and $$g_1''(x)=-2\alpha-\varphi''(x)=\alpha(\varphi'(x)-2).$$
It follows that
$g_1''(x)\le0$ when $\alpha\ge0$. So we have $g_1'(x)\le g_1'(0)=0$ and $g_1(x)\le g_1(0)=0$ for all $x\in [0,\infty)$. This proves (c).

Another computation gives
\begin{eqnarray*}
g_2'(x)&=&2\alpha\varphi\varphi'-2\alpha\varphi-2(1+\alpha x)\varphi'+2\\
&=&2\alpha\varphi\varphi'-2(1+\alpha x)\varphi'+2\varphi'\\
&=&2\alpha(\varphi-x)\varphi'.
\end{eqnarray*}
By part (b), we have $g_2'(x)\le0$ for all $x\in[0,\infty)$. Therefore, $g_2(x)\le g_2(0)=0$ for
all $x\in [0,\infty)$. This proves (d).

A similar computation produces
\begin{eqnarray*}
g_3'(x)&=&1-\alpha\varphi(x)-(1+\alpha x)\varphi'(x)\\
&=&-\alpha x\varphi'(x),
\end{eqnarray*}
which yields $g_3(x)\ge g_3(0)=0$ for all $x\in [0,\infty)$ when $\alpha\le0$ and $g_3(x)\le g_3(0)=0$ for all $x\in [0,\infty)$ when $\alpha\ge0$. This proves (e) and
completes the proof of the lemma.
\end{proof}

Finally, Lemma \ref{4} is applied to derive the following fundamental property.

\begin{lem}
\label{5}
Given a nonconstant entire function $f:\mathbb C\to\mathbb C$, suppose
$$
\begin{cases}
\alpha\in\mathbb{R};\\
x\in[0,\infty);\\
p\in (0,\infty);\\
M(x)\equiv M_p(f,\sqrt{x})=\int_0^{2\pi}|f(\sqrt{x}e^{i\theta})|^p\,d\theta;\\
h=h(x)\equiv\int_0^r M_p(f,t)e^{-\alpha t^2}\,2tdt=\int_0^xM(t)e^{-\alpha t}\,dt.
\end{cases}
$$
Let
$$
\begin{cases}
A=A(x)\equiv\frac{\varphi(x)-x}{\varphi^2(x)};\\
B=B(x)\equiv (1-\alpha x)+x\frac{M'(x)}{M(x)};\\
C=C(x)\equiv x\varphi'(x);\\
\Delta(x)\equiv D(h(x))-D(\varphi(x)).
\end{cases}
$$
Then \begin{enumerate}
\item[(a)]
$S=S(x)=\sqrt{B^2-4AC}>0\ \forall\  x\in(0,\infty).$
\item[(b)] $\Delta(x)\sim-A \frac{h^2}{M^2}+B\frac{h}{M}-C\ \forall\ x\in (0,\infty)$
\end{enumerate}
\end{lem}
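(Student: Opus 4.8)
The plan is to dispatch the computational identity (b) first and then to reduce the inequality (a) to a one-variable statement that turns out to be a perfect square. For (b) I would begin by differentiating the two integrals, getting $h'(x)=M(x)e^{-\alpha x}$ and $h''(x)=\bigl(M'(x)-\alpha M(x)\bigr)e^{-\alpha x}$, while $\varphi'(x)=e^{-\alpha x}$ and $\varphi''(x)=-\alpha e^{-\alpha x}$. Substituting these into the operator $D$ of Lemma \ref{2}(b) and using the grouping $M+x(M'-\alpha M)=MB$, I expect
\[
D(h)=\frac{MBe^{-\alpha x}}{h}-\frac{xM^2e^{-2\alpha x}}{h^2},\qquad D(\varphi)=\frac{(1-\alpha x)e^{-\alpha x}}{\varphi}-\frac{xe^{-2\alpha x}}{\varphi^2}.
\]
I would then multiply $\Delta(x)=D(h)-D(\varphi)$ by the strictly positive factor $h^2/(M^2e^{-\alpha x})$. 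The only nonroutine simplification is $(1-\alpha x)\varphi-xe^{-\alpha x}=\varphi-x$, which follows immediately from $\varphi'=1-\alpha\varphi$ (Lemma \ref{4}(a)) and turns the $D(\varphi)$ contribution into exactly $A\,h^2/M^2$. The product becomes $-A\,h^2/M^2+Bh/M-C$, and since the multiplier is positive, the sign relation (b) follows.

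For (a) I would split on the sign of $\alpha$, noting first that $C=x\varphi'>0$ for every $x>0$. When $\alpha=0$ one has $\varphi=x$, so $A=0$ and $B^2-4AC=B^2\ge1>0$; when $\alpha>0$, Lemma \ref{4}(b) gives $\varphi-x<0$, hence $A<0$, so $-4AC>0$ and $B^2-4AC>0$ regardless of $B$. These cases come for free. The genuine case is $\alpha<0$, where $A>0$ and $C>0$, so $B^2>4AC$ is not automatic. Here I would invoke that the integral means of $|f|^p$ are nondecreasing (subharmonicity of $|f|^p$), so $M'\ge0$ and therefore $B=(1-\alpha x)+xM'/M\ge 1-\alpha x>0$; it then suffices to prove the sharper, $f$-free inequality $(1-\alpha x)^2>4AC$, equivalently $(1-\alpha x)^2\varphi^2-4x\varphi'(\varphi-x)>0$.

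The main obstacle is exactly this last inequality, which is transcendental. The plan is to set $s=-\alpha x>0$ and, after clearing $\varphi^2$ and writing $E=e^{s}$, to observe that the quantity $(1-\alpha x)^2\varphi^2-4x\varphi'(\varphi-x)$ equals $\alpha^{-2}Q(s)$ with $Q(s)=(1+s)^2(E-1)^2-4sE(E-1-s)$. Collecting the coefficients of $E^2$, $E$ and $1$ shows they equal $(1-s)^2$, $-2(1-s)(1+s)$ and $(1+s)^2$, so that $Q(s)=\bigl[(1-s)E-(1+s)\bigr]^2$ is a perfect square. Recognizing this factorization is the crux; once it is in hand, positivity for $s>0$ is easy, since $\psi(s)=(1-s)e^{s}-(1+s)$ satisfies $\psi(0)=0$ and $\psi'(s)=-se^{s}-1<0$, whence $\psi(s)<0$ and $Q(s)=\psi(s)^2>0$ on $(0,\infty)$. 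Combining the three cases yields $B^2-4AC>0$ throughout $(0,\infty)$, which is (a).
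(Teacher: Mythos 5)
Your proposal is correct, and its two halves relate to the paper's proof differently. Part (b) is essentially the paper's computation: both differentiate to get $h'=M\varphi'$, $h''=(M'-\alpha M)\varphi'$, group $M+x(M'-\alpha M)=MB$, and multiply $\Delta$ by the positive factor $h^2/(M^2\varphi')$, with the identity $(1-\alpha x)\varphi-x\varphi'=\varphi-x$ (from Lemma~\ref{4}(a)) turning the $D(\varphi)$ contribution into $A\,h^2/M^2$; nothing new there. In part (a) you take a genuinely different route. The paper argues uniformly in $\alpha$: it bounds $B^2-4AC$ below by $(1-\alpha x)^2-4AC$, recognizes the latter as the perfect square $\bigl(2x-(1+\alpha x)\varphi\bigr)^2/\varphi^2$, and gets strict positivity from Lemma~\ref{4}(e) (for $\alpha\le 0$, $x-(1+\alpha x)\varphi\ge 0$ gives $2x-(1+\alpha x)\varphi\ge x>0$). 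Your square is the same one in disguise: with $s=-\alpha x$ and $E=e^s$ one checks $2x-(1+\alpha x)\varphi=\alpha^{-1}\bigl[(1-s)E-(1+s)\bigr]$, so $Q(s)=\alpha^2\varphi^2\bigl[(1-\alpha x)^2-4AC\bigr]$; you rediscover the factorization by coefficient matching and replace the appeal to Lemma~\ref{4}(e) by the elementary monotonicity $\psi'(s)=-se^s-1<0$, $\psi(0)=0$. Both are sound, but your case split on the sign of $\alpha$ buys something concrete: the paper's first step $B^2-4AC>(1-\alpha x)^2-4AC$ amounts to $B^2>(1-\alpha x)^2$, which is automatic for $\alpha\le 0$ (then $B>1-\alpha x>0$) but can fail for $\alpha>0$ and $x>1/\alpha$ --- e.g.\ $f(z)=z$ gives $xM'/M=p/2$, so $B=1-\alpha x+p/2$ satisfies $|B|<|1-\alpha x|$ for small $p$ --- whereas your observation that $\alpha>0$ forces $A<0$, $C>0$, hence $B^2-4AC\ge -4AC>0$, disposes of that case without any inequality on $B$ (and $\alpha=0$, where $A=0$ and $B\ge 1$, is equally immediate). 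So your version of (a) is actually more careful than the paper's as written; a further small economy is that, as you note, $M'\ge 0$ suffices because strictness is supplied entirely by the $f$-free square, while the paper invokes $M'>0$.
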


\begin{proof} (a) Noticing that $M'>0$ and $M>0$, together with $x-(1+\alpha x)\varphi\ge0$ by Lemma \ref{4}, we have
\begin{eqnarray*}
B^2-4AC&=&\left[(1-\alpha x)+\frac{xM'}M\right]^2-4x\varphi'
\frac{\varphi-x}{\varphi^2}\\&>&(1-\alpha x)^2-4x\varphi'
\frac{\varphi-x}{\varphi^2}\\&=&\frac{(2x-(1+\alpha x)\varphi)^2}{\varphi^2}\\
&>&0.
\end{eqnarray*}

(b) Since
$$\varphi'=e^{-\alpha x},\quad \varphi''=-\alpha e^{-\alpha x},$$
we have
\begin{eqnarray*}
D(\varphi(x))=\frac{(1-\alpha x)\varphi'}{\varphi}-x\frac{(\varphi')^2}{\varphi^2}
=\frac{(\varphi-x)\varphi'}{\varphi^2}.
\end{eqnarray*}
On the other hand,
$$h'=h'(x)=M(x)\varphi',$$
and
$$h''=h''(x)=[M'(x)-\alpha M(x)]\varphi'.$$
It follows from simple calculations that
$$D(h)=\frac{hh'+xhh''-x(h')^2}{h^2}
=\frac{(1-\alpha x)Mh+xM'h-xM^2\varphi'}{h^2}\varphi'.$$
Therefore,
\begin{eqnarray*}
\Delta(x)&=&\frac{\varphi' M}{h^2}(hB-CM)-\varphi'A\\
&\sim&-A \frac{h^2}{M^2}+B\frac{h}{M}-C.
\end{eqnarray*}
\end{proof}

\section{Proofs of Theorems \ref{t1}\&\ref{t2}}

To verify  Theorems \ref{t1}\&\ref{t2}, we use (ii) of Lemma~\ref{2} to show the logarithmic convexity of $h(x)/\varphi(x)$ on
$(0,\infty)$. According to Lemma~\ref{3}, this will be accomplished if we can prove $\Delta(x)\ge 0$.

Suppose that $\alpha<0$. From Lemma~\ref{4} it follows that $A(x)$, $B(x)$ and $C(x)$ are all positive
on $(0,\infty)$ as $\alpha\le0$ and $M'/M>0$. By some direct computations, we have
\begin{eqnarray*}
xA'(x)&=&\frac{x}{\varphi^3}\left[\alpha\varphi^2-2(1+\alpha x)\varphi+2x\right],\\
B'&=&-\alpha+\left(x\frac{M'(x)}{M(x)}\right)',\\
 xC'&=&x(1-\alpha x)\varphi'=(1-\alpha x)C.
 \end{eqnarray*}
Thus, an application of Lemma~\ref{5} yields that $\Delta(x)\ge0$ is equivalent to
\begin{equation}\label{eq4}
-\frac{\sqrt{B^2-4AC}}{2A}\leq \frac{h}{M}-\frac{B}{2A}\leq\frac{\sqrt{B^2-4AC}}{2A}.
\end{equation}

Since the function $M$ is positive and increasing, we have
$$B(x)\ge1-\alpha x\ge0,\quad h(x)\leq\int_0^xM(x)\varphi'(t) dt=M(x)\varphi(x).$$
It follows from this, the proof of Lemma~\ref{5}, part (b) of Lemma~\ref{4}, and the
triangle inequality that
\begin{eqnarray*}\frac{B+\sqrt{B^2-4AC}}{2A}&\ge&\frac{(1-\alpha x)+\left|1+\alpha x-\frac{2x}{\varphi}\right|}{2A}\\
&\ge&\frac{1-\alpha x+1+\alpha x-\frac{2x}{\varphi}}{2A}\\
&=&\frac{2(\varphi-x)}{2A\varphi}=\varphi\ge\frac{h}{M}.
\end{eqnarray*}
This proves the right half of (\ref{eq4}).

To prove the left half of (\ref{eq4}), we write
\begin{equation}\label{eq1}
\delta=\delta(x)=h-M\frac{B-\sqrt{B^2-4AC}}{2A}
\end{equation}
for $x\in(0,\infty)$ and proceed to show that $\delta(x)$ is nonnegative.
It follows from the elementary identity
$$\frac{B-\sqrt{B^2-4AC}}{2A}=\frac{2C}{B+\sqrt{B^2-4AC}}$$
that $\delta(x)\to0$ as $x\to 0^+$. If we can show that $\delta'(x)\ge0$
for all $x\in(0,\infty)$, then we will obtain
$$\delta(x)\ge\lim_{t\to0^+}\delta(t)=0,\quad x\in(0,\infty).$$
The rest of the proof is thus devoted to proving the
inequality $\delta'(x)\ge0$ for $x\in(0,\infty)$.

By a direct computation, we have
\begin{eqnarray*}
\delta'(x)&=&M\varphi'-\frac{M'A-MA'}{2A^2}\left(B-\sqrt{B^2-4AC}\right)\\
&-&\frac{M}{2A}\left(B'-\frac{BB'-2(A'C+AC')}{\sqrt{B^2-4AC}}\right)\\
&=&\frac{M}{x}\left[C-\left(\frac{xM'}{M}-\frac{xA'}{A}\right)\frac{B-S}{2A}\right.\\
&&\left.+\frac{xB'}{2AS}(B-S)-\frac{xA'C+(1-\alpha x)AC}{AS}\right].
\end{eqnarray*}

Noticing that $B+S\ge0$ for any $\alpha\in (-\infty,\infty)$. Multiplying $\frac{xS(B+S)}{MC}$ on both sides of the above expressions of $\delta'(x)$, and then using (\ref{eq4}) and (\ref{eq1}), we obtain that
\begin{eqnarray*}
\delta'&\sim&(B+S)S-2S\left(\frac{xM'}M-\frac{xA'}{A}\right)\\
&&+2xB'-\left(\frac{xA'}{A}+1-\alpha x\right)(B+S)\\
&=&\left(\frac{xM'}M-\frac{xA'}{A}\right)(B-S)+2xB'-4AC\\
&=&\left(\frac{xM'}M-\frac{xA'}{A}\right)(B-S)+2xA'\varphi+2xD(M(x))\equiv d_1.\end{eqnarray*}

We will determine the sign of $d_1$. To this end, we let$$y=\frac{xM'}M$$and
$$d_2=\left(y-\frac{xA'}{A}\right)(B-S)+2xA'\varphi.$$
Note that $A,C,A'$ are independent of $y$ and $B=1-\alpha x+y$. A simple computation shows that $S'(y)=B/S$. Multiplying $$\frac{B+S}{-2xA'\varphi}$$ on the both sides of the above expressions of $d_2$, we obtain that
\begin{eqnarray*}
d_2&\sim&\left(y-\frac{xA'}{A}\right)\cdot \frac{4AC}{-2xA'\varphi}-(B+S)\\
&=&\frac{\alpha}{\varphi A'}y+\left(\frac{2x}{\varphi}-1-\alpha x\right)-S\equiv d_3.\end{eqnarray*}

\begin{proof}[Proof of Theorem \ref{t1}, Continued] Since $\alpha<0$, using the assumption $D(M(x))\ge0$ we have $d_1\ge d_2$. By using Lemma \ref{4}(e) we can easily see that $$\frac{\alpha}{\varphi A'}\ge1.$$ It follows from a direct computation and Lemma \ref{4} that
\begin{eqnarray*}
d_2&\sim&\left[\frac{\alpha}{\varphi A'}y+\left(\frac{2x}{\varphi}-1-\alpha x\right)\right]^2-S^2\\
&=&y\left[\left(\frac{\alpha^2}{(\varphi A')^2}-1\right)y+2\frac{\alpha}{\varphi A'}\left(\frac{2x}{\varphi}-1-\alpha x\right)-2(1-\alpha x)\right]\\
&\sim&y-y_0,\end{eqnarray*}where$$y_0=\frac{[x(1-\alpha x)-\varphi][\alpha\varphi^2-2(1+\alpha x)\varphi+2x]}{(\varphi-x)[x-(1+\alpha x)\varphi]}.$$

Since $$-\alpha\left[\varphi-x(1-\alpha x)\right]=e^{-\alpha x}-1+\alpha x-\alpha^2 x^2,$$we consider $$u(t)=e^{t}-1-t-t^2,\qquad t>0.$$ It follows from elementary calculus that $u(t)$ has a unique root
$t_0=1.79\cdots$ on $(0,\infty)$ and $u(t)<0$ on $(0,t_0)$ and $u(t)>0$ on $(t_0,\infty)$. Hence $\varphi-x(1-\alpha x)$ has a unique root $x_0=t_0/(-\alpha)$ on $(0,\infty)$ and $\varphi-x(1-\alpha x)<0$ on $(0,x_0)$ and $\varphi-x(1-\alpha x)>0$ on $(x_0,\infty)$.

Note that $y_0\sim\varphi-x(1-\alpha x)$. When $x\le x_0$, $y_0\le0$, so we have $d_2\ge0$ and hence $d_1\ge0$ on $(0,x_0)$. In particular, the function
$$x\mapsto  \ln\frac{\displaystyle\int_0^xM(t)e^{-\alpha t} dt}
{\displaystyle\int_0^xe^{-\alpha t} dt}$$
is convex in $ \ln x$ for $x\in (0,x_0)$. As for $x\in I\cap(x_0,\infty)$, we have $y_0\ge0$, the assumption $y\ge y_0$ when $x\in I$ implies $d_2\ge0$ and hence $d_1\ge0$ on $I\cap(x_0,\infty)$. This shows that $d_1$ is always nonnegative and completes the proof of Theorem \ref{t1}.
\end{proof}

\begin{proof}[Proof of Theorem \ref{t2}, Continued] Since $\alpha<0$, it follows from Lemma \ref{4} that 
$$
B-S\sim B^2-S^2=4AC\ge0\quad\&\quad \frac{xA'}{A}\le0.
$$
Hence\begin{eqnarray*}d_2'(y)&=&B-S+\left(y-\frac{xA'}{A}\right)(1-\frac{B}{S})\\&\sim& S-\left(y-\frac{xA'}{A}\right)\\&\sim& S^2-\left(y-\frac{xA'}{A}\right)^2\\
&=&\frac{2(\varphi^2-x(3+\alpha x)\varphi+2x^2)}{\varphi(\varphi-x)}y\\&&-\frac{(\varphi-x(1-\alpha x))(-(1+2\alpha x)\varphi^2+x(5+3\alpha x)\varphi-4x^2)}{\varphi(\varphi-x)^2}.
\end{eqnarray*}
It follows from Lemma \ref{4} that 
$$\varphi-x\ge0\quad\&\quad \varphi^2-x(3+\alpha x)\varphi+2x^2=(\varphi-x)^2+x(x-(1+\alpha x)\varphi)\ge0,$$
and 
\begin{eqnarray*}
&&-(1+2\alpha x)\varphi^2+x(5+3\alpha x)\varphi-4x^2\\
&&\qquad=\ x(\varphi-x)+(x-(1+\alpha x)\varphi)(\varphi-x)-x[\alpha\varphi^2-2(1+\alpha x)\varphi+2x]\\
&&\qquad\ge\ 0.
\end{eqnarray*}
So we have $d_2'(y)\sim y-y^*$,
where\begin{eqnarray*}y^*=\frac{(\varphi-x(1-\alpha x))(-(1+2\alpha x)\varphi^2+x(5+3\alpha x)\varphi-4x^2)}{2(\varphi-x)(\varphi^2-x(3+\alpha x)\varphi+2x^2)}.\end{eqnarray*}
Note that $y^*\sim\varphi-x(1-\alpha x)$.

When $x\le x_0$, $y^*\le0$, $d_2'(y)$ and hence $d_2$ is nonnegative. As for $x>x_0$, $y^*\ge0$, $d_2(y)$ attains its minimum value at $y^*\in(0,\infty)$. A direct computation shows that$$d_2(y^*)=-\frac{1}{2}\left(1+\frac{\alpha x^2}{\varphi-x}\right)^2.$$So we have \begin{eqnarray*}d_1&\ge&2xD(M(x))+d_2(y^*)\\
&\sim& D(M(x))-\frac{1}{4x}\left(1+\frac{\alpha x^2}{\varphi-x}\right)^2.\end{eqnarray*}
This shows that $d_1$ is always nonnegative and completes the proof of Theorem~\ref{t2}.
\end{proof}

\section{Proof of Theorem \ref{t3}}

To demonstrate Theorem \ref{t3}, we indicate how to adapt the proof of
Theorem \ref{t1} or Theorem \ref{t2} above to show that $\Delta(x)\le0$.

Suppose $\alpha>0$. Then $A<0$ by Lemma \ref{4} and so $\Delta(x)\le 0$ is equivalent to $$-\frac{\sqrt{B^2-4AC}}{2A}\leq \frac{h}{M}-\frac{B}{2A}.$$So we need only to prove that $\delta=\delta(x)$ defined in (\ref{eq1}) is not positive for all $x\in(0,\infty)$. It is enough for us to prove that $\delta'(x)\le0$ since $\delta(0)=0$. We have proved that $\delta'(x)\sim d_1$. Since $M(x)$ is   logarithmic  concave, that is, $D(M(x))\le0$, we obtain $d_1\le d_2$. But $d_2\sim d_3$. Recall that
$$
d_3=\frac{\alpha}{\varphi A'}y+\left(\frac{2x}{\varphi}-1-\alpha x\right)-S.$$
Noticing that
$$\frac{\alpha}{\varphi A'}\le0
$$
by Lemma \ref{4}, we have
$$d_3\le\left(\frac{2x}{\varphi}-1-\alpha x\right)-S.$$
By the proof of Lemma \ref{5}, we have $$S\ge\left|\frac{2x}{\varphi}-1-\alpha x\right|.$$ Thus we get $d_3\le0$ and hence $d_2\le0$. This shows that $d_1\le0$ and completes the proof of Theorem \ref{t3}.

\end{document}